\newcommand{\nn}{\nonumber}
\def\T{{ \mathrm{\scriptscriptstyle T} }}
\def\##1\#{\begin{align}#1\end{align}}
\def\$#1\${\begin{align*}#1\end{align*}}
\def\T{{ \mathrm{\scriptscriptstyle T} }} 
\newcommand{\Rom}[1]{\text{\uppercase\expandafter{\romannumeral #1\relax}}}
\newcommand{\scolor}{}
\begin{document}

\title{\LARGE  Gaussian Approximations for Maxima of Random Vectors under $(2+\iota)$-th Moments }

\author{Qiang Sun\thanks{Department of Statistical Sciences, University of Toronto, 100 St. George Street, Toronto, ON M5S 3G3, Canada; E-mail: \texttt{qsun@utstat.toronto.edu}.}}

\date{}

\maketitle


\begin{abstract}
 We derive a Gaussian approximation result for the maximum of a sum of random vectors under $(2+\iota)$-th moments. Our main theorem is abstract and nonasymptotic, and can be applied to a variety of statistical learning problems.  The proof uses the Lindeberg telescopic sum device along with  some other  newly developed technical results.  
 \end{abstract}

\noindent {\bf keywords}
  Gaussian Approximation, Maxima. 

\section{Introduction and Main Result}\label{sec:1}
We derive a Gaussian approximation result for maxima of sums of high dimensional random vectors under $(2+\iota)$-th moments for some $0\leq \iota\leq 1$. This \scolor{complements the results of \cite{chernozhukov2014gaussian} which require third moment condition; see Theorem 4.1 therein.  Later, \cite{chernozhukov2017central} provided high-dimensional central limit and bootstrap theorems for sparsely convex sets.}   Our derivation utilizes the Lindeberg telescopic sum device along with  some other  newly developed technical results.  


 Let $X_1, \ldots, X_n$ be independent random vectors in $\RR^d$ with mean zero and finite $(2+\iota)$-th moments,  that is, $\EE( X_{ij}) =0$ and $\EE\big( |X_{ij}|^{2+\iota}\big) <\infty$, for some $0\leq \iota\leq 1$. Let $\Sigma \equiv\EE \big( X_i X_i^\T\big)$.  Consider the statistic $Z=\max_{1\leq j\leq d}\sum_{i=1}^n X_{ij}.$ Let $Y_1,\ldots, Y_n$ be independent random vectors in $\RR^d$ with $Y_i\sim \cN(0, \Sigma).$ For $0\leq\iota\leq 1$ and  $\gamma, q>0$ such that $\gamma\delta>1$, let
 \scolor{
\#\label{eq:L}
 L_n(\gamma,\delta, \iota)=\min \bigg\{\gamma^2\delta^{-1}\EE  \bigg(\max_{j}\sum \big|X_{ij}\big|^3+\max_{j}\sum \big|Y_{ij}\big|^3\bigg), \gamma^{\frac{4+2\iota}{3}}\delta^{-\frac{2+\iota}{3}}\sum_{i=1}^n C_i(2+\iota)\bigg\},
\#
where
$	C_i(q) = \EE\big(\max_{1\leq j\leq d}|X_{ij}|^{q} + \max_{1\leq j\leq d}|Y_{ij}|^{q}\big).$
} 
\scolor{Let ``$\lesssim$" stand for ``$\leq$" up to a universal constant.} Our main result follows.

\begin{theorem}\label{thm:1}
For any positive scalers $\delta, \gamma$ such that $\delta\gamma>1$ and $\varepsilon=\gamma \delta \exp\{-(\gamma^2\delta^2-1) /2\}<1$, 
   there exists a random variable \scolor{$Z^\dagger {\buildrel d \over =}\max_{1\leq j\leq d}\sum_{i=1}^n Y_{ij}$} such that 
\$
\PP\big(|Z-Z^\dagger |\geq c_\gamma+3\delta\big)\lesssim\frac{\varepsilon+L_n(\gamma, \delta,\iota)}{1-\varepsilon}.
\$
\end{theorem}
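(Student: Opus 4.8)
The plan is to pass from the coordinatewise maximum to a smooth surrogate, compare the two laws by a Lindeberg swap, and then read off a coupling. First I would replace the maximum by the log-sum-exp surrogate $F_\gamma(x)=\gamma^{-1}\log\sum_{j=1}^d e^{\gamma x_j}$, which satisfies $0\le F_\gamma(x)-\max_j x_j\le c_\gamma$ for the softmax error $c_\gamma$ recorded above; thus $Z$ and $\tilde Z:=\max_j\sum_i Y_{ij}$ sit within $c_\gamma$ of $F_\gamma(\sum_i X_i)$ and $F_\gamma(\sum_i Y_i)$, and it suffices to compare these two surrogates. I would take the test function $g$ to be a Gaussian-smoothed indicator at scale $\delta$, so that $\|g^{(k)}\|_\infty\lesssim\delta^{-k}$ for $k=1,2,3$, and set $h:=g\circ F_\gamma$.

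Second, apply the Lindeberg telescopic sum device: writing $V_i=\sum_{l<i}X_l+\sum_{l>i}Y_l$, decompose $\EE[h(\sum_i X_i)]-\EE[h(\sum_i Y_i)]=\sum_i\EE[h(V_i+X_i)-h(V_i+Y_i)]$. Expanding each term to second order and using $\EE X_i=\EE Y_i=0$ together with the matched covariances $\EE X_iX_i^\T=\EE Y_iY_i^\T=\Sigma$, the zeroth, first and second order contributions cancel in expectation, leaving a remainder governed by the derivatives of $h$. Here I would use the softmax identities $\sum_j\partial_j F_\gamma=1$, $\sum_{jk}|\partial_{jk}F_\gamma|\lesssim\gamma$ and $\sum_{jkl}|\partial_{jkl}F_\gamma|\lesssim\gamma^2$ (with weights $\pi_j\propto e^{\gamma x_j}$) to obtain, by the chain rule, $U_2:=\sum_{jk}\|\partial_{jk}h\|_\infty\lesssim\|g''\|_\infty+\gamma\|g'\|_\infty\lesssim\gamma\delta^{-1}$ and $U_3:=\sum_{jkl}\|\partial_{jkl}h\|_\infty\lesssim\|g'''\|_\infty+\gamma\|g''\|_\infty+\gamma^2\|g'\|_\infty\lesssim\gamma^2\delta^{-1}$, the simplifications using $\gamma\delta>1$. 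The diagonal-dominant structure of the softmax derivatives lets me keep the cubic remainder in the coordinatewise form $\gamma^2\delta^{-1}\sum_j\pi_j|X_{ij}|^3\le\gamma^2\delta^{-1}\max_j|X_{ij}|^3$ before reorganizing the sum in $i$, which yields the first branch $\gamma^2\delta^{-1}\EE(\max_j\sum_i|X_{ij}|^3+\max_j\sum_i|Y_{ij}|^3)$ of $L_n$.

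Third, to accommodate only $(2+\iota)$ moments I would not push the Taylor expansion to third order but instead bound the genuine second-order remainder two ways, $|R_i|\le\min(U_2|X_i|_\infty^2,\ \tfrac16 U_3|X_i|_\infty^3)$, and interpolate through $\min(a,b)\le a^{1-\iota}b^\iota$ (equivalently, truncate $X_i$ at an optimized level). Summing over $i$ and inserting $U_2\lesssim\gamma\delta^{-1}$, $U_3\lesssim\gamma^2\delta^{-1}$ produces a remainder which, using $\gamma\delta>1$, is bounded by the second branch $(\gamma^2\delta^{-1})^{(2+\iota)/3}\sum_i C_i(2+\iota)$ of $L_n$; taking the smaller of the two branches gives $L_n(\gamma,\delta,\iota)$. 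This is exactly where the $(2+\iota)$-th moment hypothesis is consumed.

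Finally, I would convert the resulting uniform closeness of the smoothed distribution functions of the two surrogates into an almost sure coupling. A Strassen-type quantile-coupling lemma then produces $Z^\dagger\stackrel{d}{=}\tilde Z$ for which $\{|Z-Z^\dagger|\ge c_\gamma+3\delta\}$ has probability at most the stated ratio: the slack $c_\gamma+3\delta$ aggregates the softmax error and the smoothing windows of width $\delta$ incurred in passing between the hard maxima and their surrogates, the numerator adds the Lindeberg error $L_n$ to the residual Gaussian-tail mass $\varepsilon=\gamma\delta\exp\{-(\gamma^2\delta^2-1)/2\}$ left by the scale-$\delta$ kernel after it is transferred through the scale-$\gamma^{-1}$ surrogate, and the factor $(1-\varepsilon)^{-1}$ is the normalization from conditioning on the good event. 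I expect the main obstacle to be precisely this last lemma: the classical route would invoke an anti-concentration inequality for the Gaussian maximum, costing a $\sqrt{\log d}$ factor and a genuine third moment, whereas obtaining the sharp sub-Gaussian error $\varepsilon$ without anti-concentration—so that $(2+\iota)$ moments suffice—requires the dedicated smoothing/coupling estimate that forms the paper's newly developed technical input.
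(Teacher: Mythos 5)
Your proposal is correct and follows essentially the same route as the paper: log-sum-exp smoothing of the maximum, a smoothed-indicator sandwich producing the $\varepsilon$ term, a Lindeberg telescopic sum whose first- and second-order terms cancel by mean-zero and matched covariances, an interpolation between the quadratic and cubic remainder bounds so that only $(2+\iota)$-th moments are consumed (your Lagrange-remainder plus $\min(a,b)\le a^{1-\iota}b^{\iota}$ step plays the role of the paper's Lemma \ref{lemma:moment} and yields the same second branch of $L_n$), and finally a Strassen-type coupling, exactly as in the paper. Two small corrections: the indicator smoothing must use a kernel at scale $\gamma^{-1}$ over a ramp of width $\delta$, giving $\|g'\|_\infty\lesssim\delta^{-1}$, $\|g''\|_\infty\lesssim\gamma\delta^{-1}$, $\|g'''\|_\infty\lesssim\gamma^2\delta^{-1}$ as in Lemma \ref{lemma:smooth} (a scale-$\delta$ Gaussian with $\|g^{(k)}\|_\infty\lesssim\delta^{-k}$ cannot have tail error as small as $\varepsilon=\gamma\delta\exp\{-(\gamma^2\delta^2-1)/2\}$, though your downstream composite bounds $U_2\lesssim\gamma\delta^{-1}$, $U_3\lesssim\gamma^2\delta^{-1}$ are unaffected), and the paper's genuinely new ingredient is the Lindeberg/interpolation step (Lemma \ref{lemma:lindeberg}) rather than the smoothing and Strassen lemmas, which are quoted from \cite{chernozhukov2014gaussian}.
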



\begin{proof}[Proof of Theorem \ref{thm:1}]
The proof of this theorem exploits the smooth approximations for the nonsmooth $\max$ and indicator functions, and the device of Lindeberg's telescopic sum \cite{lindeberg1922neue}.  Because $X_{ij}$'s only have bounded $(2+\iota)$-th moments, the Gaussian comparison inequalities developed previously \citep{chernozhukov2014gaussian} can not be applied, at least not immediately. The key technical difference is Lemma \ref{lemma:lindeberg}, where we uses the device of Lindeberg's telescopic sum. 

The rest of the proof follows from that in \cite{chernozhukov2014gaussian}. We outline it here for completeness. We start  by using a version of Strassen's theorem to prove Theorem \ref{thm:1}, i.e. Lemma 4.1 in \cite{chernozhukov2014gaussian}.  Using this lemma, the conclusion follows immediately if we can prove that for every Borel subset $A$ of $\RR$, 
\#\label{lemma:qa:eq:strassen}
\PP\big(Z\in  A\big)-\PP\big(Z^\dagger \in A^{c_\gamma+3\delta}\big)\leq\frac{\varepsilon+L_n(\gamma, \delta,\iota)}{1-\varepsilon}.
\#

 We shall fix any Borel subset $A$ of $\RR$ throughout the proof.
The first two steps are standard, which involve smooth approximations to the non-smooth maps as discussed previously. We first approximate the non-smooth map $\RR^d\mapsto \RR: x\mapsto \max_{1\leq j\leq d}x_j$ by the smooth function $\psi_\gamma: \RR^d \mapsto \RR$ defined by $\psi_\gamma(x)=\gamma^{-1} \log\big(\sum_{j=1}^d e^{\gamma x_j}\big)$ for $x\in \RR^d$. By elementary calculations, we have for any $x = (x_1,\ldots, x_d)^\T$,
\#\label{eq:smooth}
\max_{1\leq j\leq d} x_j\leq \psi_{\gamma}(x)\leq \max_{1\leq j\leq d} x_j + c_{\gamma}  ,
\#
where $c_{\gamma} = \gamma^{-1}\log d$. 
Similarly, let $S_n = \sum_{i=1}^n X_i$ and $S_n^{\dagger} = \sum_{i=1}^n Y_i$, the Gaussian analogue of $S_n$. Then 
\$
\PP\big(Z\in  A\big)\leq \PP\big( \psi_\gamma(S_n)\in A^{c_{\gamma}}\big) =\EE  \big[ 1_{A^{c_{\gamma}}} \{ \psi_\gamma (S_n ) \}  \big] .
\$

Then we approximate the indicator function $t\mapsto 1_{A}(t)$ by a smooth function. We utilize the following lemma, which is taken from \cite{chernozhukov2014gaussian} and can be traced back to \cite{pollard2002user}. 
\begin{lemma}
\label{lemma:smooth}
Let $\gamma>0$ and $\delta> \gamma^{-1}$. For every Borel subset $A$ of $\RR$, there exists a smooth function $g: \RR\mapsto \RR$ such that $\|g'\|_\infty\leq \delta^{-1}$, $\|g''\|_\infty\leq C \delta^{-1} \gamma$, $\|g'''\|_\infty\leq C \delta^{-1} \gamma^2 $ and
\$
(1-\varepsilon)1_A(t)\leq g(t)\leq \varepsilon+(1-\varepsilon)1_{A^{3\delta}}(t)~\mbox{ for all } t\in \RR,
\$
where $C>0$ is an absolute constant and $\varepsilon=\varepsilon( \gamma,\delta ) = \gamma \delta \exp\{-(\gamma^2\delta^2-1) /2\}<1$. 
\end{lemma}

We take a suitable function $g$ as justified in Lemma \ref{lemma:smooth} to the set $A^{c_\gamma}$ and obtain
\$
\EE  [ 1_{A^{c_\gamma}}\{ \psi_\gamma(S_n) \}  ] \leq (1-\varepsilon)^{-1}\EE  \{ g \circ \psi_{\gamma}(S_n)  \}. 
\$
For simplicity, we write $f = g \circ \psi_\gamma$, i.e., $f(x) = g(\psi_\gamma(x))$ for $x\in \RR$. Then, it suffices to compare $\EE\{ f(S_n)\}$ and $\EE\{ f(S_n^\dagger)\}$ using the smoothness of $f$. If we can establish the following inequality,
\#\label{eq:exp.bd}
\big|\EE f(S_n)- \EE f(S_n^\dagger) \big|\lesssim L_n(\gamma,\delta, \iota),
\#
which is provided in the Lemma \ref{lemma:lindeberg}. 
Then, applying Lemma \ref{lemma:smooth} again, it follows
\$
& \PP\big(Z \in A \big) - \PP\big( Z^\dagger \in A^{c_\gamma+3\delta} \big) \\
 &\leq \EE \big[ 1_{A^{c_\gamma}}\{ \psi_\gamma (S_n ) \} \big] -\PP\big(Z^\dagger \in A^{c_\gamma+3\delta}\big)\leq (1-\varepsilon)^{-1}\EE f(S_n) - \PP\big( Z^\dagger\in A^{c_\gamma+3\delta}	\big)\\
&\lesssim \frac{\EE f(S_n^\dagger) }{1-\varepsilon}-\PP\big(	Z^\dagger \in A^{c_\gamma+3\delta} \big)+\frac{L_n(\gamma, \delta,\iota)}{1-\varepsilon}\leq\frac{\varepsilon+L_n(\gamma, \delta,\iota)}{1-\varepsilon},
\$
where we used the property of the smooth approximation $\psi_\gamma$ in the last inequality. Therefore, we only need to prove \eqref{eq:exp.bd}. This completes the proof. 
\end{proof}

\section{Statement and Proof of Lemma \ref{lemma:lindeberg}}
\begin{lemma}\label{lemma:lindeberg}
Recall the definitions for $f$, $S_n$ and $S_n^\dagger$ in the proof of Lemma \ref{thm:1}. Then, for any $0\leq \iota\leq 1$, we have
\$
|\EE f(S_n)- \EE  f(S_n^\dagger ) | \leq L_n(\gamma, \delta, \iota),
\$
where  \scolor{$L_n(\gamma, \delta, \iota)$ is defined in \eqref{eq:L}.}
\end{lemma}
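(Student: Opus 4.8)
The plan is to run Lindeberg's telescoping replacement one summand at a time and to control each one-step error by two complementary Taylor estimates, which I then combine by interpolation to reach the $(2+\iota)$-th moment rate. First I would introduce the hybrid sums $Z_i=\sum_{k<i}Y_k+\sum_{k>i}X_k$, each independent of $X_i$ and of $Y_i$, and write
\$
\EE f(S_n)-\EE f(S_n^\dagger)=\sum_{i=1}^n\big\{\EE f(Z_i+X_i)-\EE f(Z_i+Y_i)\big\}.
\$
Expanding $f$ to second order about $Z_i$, the constant, first- and second-order contributions cancel between the $X_i$ and $Y_i$ terms: the linear terms vanish since $\EE X_i=\EE Y_i=0$ with $X_i,Y_i$ independent of $Z_i$, while the quadratic terms both equal $\tfrac12\tr\big(\EE[\nabla^2 f(Z_i)]\Sigma\big)$ because $\EE[X_iX_i^\T]=\EE[Y_iY_i^\T]=\Sigma$. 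Hence the left-hand side reduces to $\sum_i(\EE R_i^X-\EE R_i^Y)$, with $R_i^X=f(Z_i+X_i)-f(Z_i)-\nabla f(Z_i)^\T X_i-\tfrac12 X_i^\T\nabla^2 f(Z_i)X_i$ the second-order remainder and $R_i^Y$ defined analogously.

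The technical core is to bound $R_i^X$ through the derivatives of the composite $f=g\circ\psi_\gamma$. Writing $\pi_j(x)=e^{\gamma x_j}/\sum_k e^{\gamma x_k}$ for the softmax weights, so that $\partial_j\psi_\gamma=\pi_j$ and $\sum_j\pi_j=1$, the chain rule expands $\partial_{jk}f$ and $\partial_{jkl}f$ into products of $g',g'',g'''$ with the $\pi$'s and Kronecker deltas, each extra differentiation of $\psi_\gamma$ producing a factor of $\gamma$. Combining the bounds $\|g'\|_\infty\le\delta^{-1}$, $\|g''\|_\infty\le C\delta^{-1}\gamma$, $\|g'''\|_\infty\le C\delta^{-1}\gamma^2$ of Lemma~\ref{lemma:smooth} with the power-mean inequalities $(\sum_j\pi_j|v_j|)^p\le\sum_j\pi_j|v_j|^p\le\max_j|v_j|^p$, I would prove the contraction bound
\$
\sum_{j,k,l}\big|\partial_{jkl}f(x)\big|\,|v_j|\,|v_k|\,|v_l|\ \le\ C\gamma^2\delta^{-1}\max_{1\le j\le d}|v_j|^3
\$
together with its second-order analogue $\sum_{j,k}|\partial_{jk}f(x)|\,|v_j||v_k|\le C\gamma\delta^{-1}\max_j|v_j|^2$, both uniform in $x$. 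Substituting $v=X_i$ yields two pointwise estimates for the remainder, $|R_i^X|\le C\gamma\delta^{-1}\max_j|X_{ij}|^2$ and $|R_i^X|\le C\gamma^2\delta^{-1}\max_j|X_{ij}|^3$. Summing the third-derivative estimate over $i$ and adding the $Y$-analogue reproduces the third-moment Lindeberg bound that is the first branch of $L_n$, exactly as in \cite{chernozhukov2014gaussian}.

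The new ingredient, and what I expect to be the main obstacle, is the second branch under only $(2+\iota)$-th moments. Since $R_i^X$ is dominated by both estimates, for every $\iota\in[0,1]$ I may interpolate:
\$
|R_i^X|\ \le\ \big(C\gamma\delta^{-1}\max_j|X_{ij}|^2\big)^{1-\iota}\big(C\gamma^2\delta^{-1}\max_j|X_{ij}|^3\big)^{\iota}\ =\ C\gamma^{1+\iota}\delta^{-1}\max_j|X_{ij}|^{2+\iota}.
\$
The hypothesis $\gamma\delta>1$ then lets me repackage the prefactor into the stated homogeneous form, since $\gamma^{1+\iota}\delta^{-1}=(\gamma\delta)^{-(1-\iota)/3}(\gamma^2\delta^{-1})^{(2+\iota)/3}\le(\gamma^2\delta^{-1})^{(2+\iota)/3}$. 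Taking expectations, which are finite precisely because only $(2+\iota)$-th moments are required, adding the identical $Y$-contribution, and summing over $i$ reproduces $C\gamma^{(4+2\iota)/3}\delta^{-(2+\iota)/3}\sum_i C_i(2+\iota)$, the second branch of $L_n$. Since both branches are valid upper bounds for the same quantity, taking their minimum finishes the proof. The crux is thus twofold: the contraction bounds on the derivatives of $g\circ\psi_\gamma$, which use the softmax normalization to trade factors of $\gamma$ against powers of the coordinates, and the interpolation step, which is exactly what allows the argument to descend below third moments.
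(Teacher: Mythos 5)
Your overall architecture is the same as the paper's: Lindeberg's telescopic swap along hybrid sums, cancellation of the first- and second-order Taylor terms via mean-zero and covariance matching, and two complementary remainder bounds combined pointwise to descend to $(2+\iota)$-th moments. Where you differ, your treatment of the second branch of $L_n$ is cleaner and slightly stronger than the paper's. The paper bounds $|R_i|$ crudely by $|f(T_i)-f(T_{i+1})|+|\Rom{1}_i|+|\Rom{2}_i|\lesssim 1+\delta^{-1}\max_j\big(|X_{ij}|+|Y_{ij}|\big)+\gamma\delta^{-1}\max_j\big(|X_{ij}|^2+|Y_{ij}|^2\big)$, using $0\le f\le 1$, and therefore needs its Lemma \ref{lemma:moment}, $\min\{a+x+x^2,x^3\}\le 3a^{(1-\iota)/3}x^{2+\iota}$ with $a=\gamma\delta$, to absorb the additive constant; that is exactly where the exponent $(4+2\iota)/3$ comes from. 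You instead use the mean-value form of the second-order remainder, $R_i^X=\tfrac12 X_i^\T\{\nabla^2 f(Z_i+\theta X_i)-\nabla^2 f(Z_i)\}X_i$, to get the pure bound $|R_i^X|\lesssim\gamma\delta^{-1}\max_j|X_{ij}|^2$, and then the pointwise geometric interpolation $\min\{A,B\}\le A^{1-\iota}B^{\iota}$, which yields the sharper prefactor $\gamma^{1+\iota}\delta^{-1}\le\gamma^{(4+2\iota)/3}\delta^{-(2+\iota)/3}$; your algebra here is correct given $\gamma\delta>1$. This is a genuinely nicer route to the second branch, and it recovers the paper's form exactly.

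There is, however, a mismatch on the first branch. Your third-order estimate is applied summand by summand, so after summing over $i$ you obtain $\gamma^2\delta^{-1}\sum_{i}\EE\max_j|X_{ij}|^3$ (plus the Gaussian analogue), whereas the first branch of $L_n$ in \eqref{eq:L} is $\gamma^2\delta^{-1}\EE\max_j\sum_i|X_{ij}|^3$, with the maximum \emph{outside} the sum over $i$. Since $\max_j\sum_i|X_{ij}|^3\le\sum_i\max_j|X_{ij}|^3$, and the gap can be large, your bound is strictly weaker, and proving the left-hand side is below the minimum of your two branches does not prove it is below $L_n$ as defined. So, as written, you have not established the lemma with the paper's $L_n$. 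You are in good company: the paper's own derivation of this branch factors the telescoped sum as $\big(\sum_{j,k,\ell}\|\partial_{jk\ell}f\|_\infty\big)\cdot\max_{j,k,\ell}\sum_i|X_{ij}X_{ik}X_{i\ell}|$ and then bounds the first factor by $\gamma^2\delta^{-1}$ using the pointwise estimate $\sup_x\sum_{j,k,\ell}|\partial_{jk\ell}f(x)|\lesssim\gamma^2\delta^{-1}$; this tacitly replaces a sup of sums by a sum of sups, and the sum of sups can exceed $\gamma^2\delta^{-1}$ by a factor growing polynomially in $d$ (e.g., $\sup_x\pi_j(x)\pi_k(x)\pi_\ell(x)$ is of constant order for each distinct triple). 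The coordinate-wise argument that you run, and that the paper in effect runs, genuinely yields only the $\sum_i\EE\max_j$ form; moving the maximum outside the sum over $i$ requires the heavier machinery of \cite{chernozhukov2014gaussian} (stability of the softmax derivative weights under perturbation, together with conditioning arguments), which neither your proposal nor the paper supplies.
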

\begin{proof}[Proof of Lemma \ref{lemma:lindeberg}]
 We use the device of Lindeberg's telescopic sum \citep{lindeberg1922neue}  to prove this lemma. \scolor{Let  $T_i=\sum_{k=1}^{i-1}Y_k+\sum_{k=i}^n X_k$, with $T_1=\sum_{k=1}^nX_k$.} Then, we write $\EE f(S_n) -\EE f(S_n^\dagger)$ as a telescopic sum:
\$
\EE f(S_n) - \EE f(S_n^\dagger) =\sum_{i=1}^n \EE f(T_i) - \EE f(T_{i+1}) .
\$
In order to bound the left-hand side in the above identity, \scolor{we instead bound the telescopic sum.} Let $\Delta_i=T_i-T_{i+1}$ and $L_i=\sum_{k=1}^{i-1}Y_k+\sum_{k=i+1}^nX_k$.  We use $\nabla f$ to denote the derivative, and $\nabla^2 f = (\partial_{jk} f)_{1\leq j, k\leq p}$ the Hessian.  $f(V_i)-f(V_{i+1})$ can be decomposed  as follows:
\#
 f(T_i)-f(T_{i+1}) &=\underbrace{(T_i-T_{i+1})^\T \nabla f(L_i)}_{\Rom{1}_i}\nn\\
 &\quad  + \underbrace{ \tfrac{1}{2} X_i^\T \nabla^2f(L_i) X_i  - \tfrac{1}{2} Y_i^\T \nabla^2f(L_i) Y_i }_{\Rom{2}_i}+R_i,  \label{Ri.definition}
\#
where $R_i$ is the remainder term such that $R_i=f(T_i)-f(T_{i+1})-\Rom{1}_i-\Rom{2}_i$. 

\scolor{Let $R=\sum_{i=1}^n R_i,\  \Rom{1}= \sum_{i=1}^n\Rom{1}_i, \ \text{and}\  \Rom{2}=\sum_{i=1}^n\Rom{2}_i.$ Then $\EE f(S_n) - \EE f(S_n^\dagger) =\EE \Rom{1}+\EE\Rom{2}+\EE{R}$.  In what follows, we bound the expectation of terms $\Rom{1}$, $\Rom{2}$, and $R_i$ respectively.  Starting with $\Rom{1}$, because $T_i - T_{i+1} = X_i - Y_i$, which is independent of $L_i$, we have 
\$
\EE \Rom{1}=  \EE\sum_{i=1}^n \Rom{1}_i=\sum_{i=1}^n \{ \EE(T_i - T_{i+1})  \}^\T   \EE \{ \nabla f(L_i) \} =0.
\$ 
}
\scolor{For $\Rom{2}$, the expectation of $\Rom{2}$ can be bounded by
\$
\EE \Rom{2}&=\EE\left\{\sum_{i=1}^n\sum_{j,k}2^{-1}\partial_{jk}f(L_i)(X_{ij}X_{ik}-Y_{ij}Y_{ik})\right\}\\
&= 2^{-1}\sum_{i=1}^n\sum_{j,k}\EE\big \{\partial_{jk}f(L_i)\big\}  \EE\big\{X_{ij}X_{ik}-Y_{ij}Y_{ik}\big\}   \tag{\text{$X_i,Y_i \perp L_i$}}\\
&=0. \tag{$\EE \{X_{ij}X_{ik}\}=\EE\{Y_{ij}Y_{ik}\}$}
\$}
 In the following lemma, we give an upper bound for the expectation of $R$. 
\scolor{
\begin{lemma}\label{lemma:remainder}
Let $f(x): \RR^d\mapsto \RR$ be defined as in Theorem \ref{thm:1}.  {Then we must have
\$
\EE R &\lesssim L_n(\gamma,\delta, \iota). 
\$}
\end{lemma}
}
\begin{proof}[Proof of Lemma \ref{lemma:remainder}]
Recall the definition of $R=\sum_{i=1}^n R_i$. 
Let $\theta$ be a uniform distributed random variable over $[0,1]$, independent of all other random variables. \scolor{
 Using the third order  Taylor approximation for multivariate functions, we obtain
\$
R_i&=\big\{f(T_i)-f(L_i)\big\}-\big\{f(T_{i+1})-f(L_i)\big\}-\Rom{1}_i-\Rom{2}_i\notag\\
&= {6^{-1}} \EE_\theta \left\{\sum_{j,k,\ell} (1+\theta)^2X_{ij}X_{ik}X_{i\ell}\partial_{jk\ell}  f(L_i+\theta X_{i})\right\} \notag\\
&\qquad\qquad+{6^{-1}}\EE_\theta\left\{\sum_{j,k,\ell} (1+\theta)^2 Y_{ij}Y_{ik}Y_{i\ell} \partial_{jk\ell} f(L_i+\theta Y_{i})\right\} ,
\$
where  the first and second-order terms canceled out. Therefore,  $\EE R$ can be bounded as 
\#\label{eq:lemma.rem:1}
\EE R&= {6^{-1}} \EE \left\{\sum_{i=1}^n\sum_{j,k,\ell} (1+\theta)^2X_{ij}X_{ik}X_{i\ell}\partial_{jk\ell}  f(L_i+\theta X_{i})\right\} \notag\\
&\qquad\qquad+{6^{-1}}\EE\left\{\sum_{i=1}^n\sum_{j,k,\ell} (1+\theta)^2 Y_{ij}Y_{ik}Y_{i\ell} \partial_{jk\ell} f(L_i+\theta Y_{i})\right\} \notag\\ 
&\leq {6^{-1}} \EE \left\{\sum_{j,k,\ell} \|\partial_{jk\ell}  f\|_\infty \max_{j,k,\ell}\sum \big|X_{ij}X_{ik}X_{i\ell}\big|\right\} \notag\\
&\qquad\qquad+{6^{-1}}\EE\left\{\sum_{j,k,\ell} \|\partial_{jk\ell}  f\|_\infty  \max_{j,k,\ell}\sum \big|Y_{ij}Y_{ik}Y_{i\ell}\big|\right\}\notag\\
&= A+B. 
\#
Now we bound $A$ and $B$ respectively. We start with $A$. }
Following elementary calculations along with Lemma \ref{lemma:smooth}, 
we obtain
\begin{gather*}
\sum_{j,k,\ell}^d\big| \partial_{jk\ell}f(x)\big|\leq \|g'''\|_\infty+6\gamma\|g''\|_\infty+6\gamma^2\|g'\|_\infty\leq   (7C+6)\gamma^2\delta^{-1} \lesssim \gamma^2\delta^{-1}.
\end{gather*}
which, combined with equation \eqref{eq:lemma.rem:1}, yields  \scolor{
\#
A  &\leq \frac{1}{6}(7C+6) \gamma^2\delta^{-1} \EE \left\{ \max_{j,k,\ell}\sum \big|X_{ij}X_{ik}X_{i\ell}\big|\right\} \nn\\
&\lesssim \gamma^2\delta^{-1} \EE \left\{ \max_{j,k,\ell}\sum \big|X_{ij}X_{ik}X_{i\ell}\big|\right\}\lesssim  \gamma^2\delta^{-1} \EE \left\{ \max_{j}\sum \big|X_{ij}\big|^3\right\}.  \label{Ri.bound2}
\#
Similarly,
\#
B  &\lesssim  \gamma^2\delta^{-1} \EE \left\{ \max_{j}\sum \big|Y_{ij}\big|^3\right\}.  \label{Ri.bound3}
\#
}
\scolor{Now using the fact that $0\leq f(x)\leq 1$ and $\EE \Rom{1}=\EE\Rom{2}=0$, we obtain
\#\label{Ri.bound1}
\EE R &=  \EE f(S_n) - \EE f(S_n^\dagger) -\EE \Rom{1}-\EE\Rom{2}\leq 1.
\#}
\scolor{
Putting the upper bounds \eqref{Ri.bound2}, \eqref{Ri.bound3}, and \eqref{Ri.bound1} together yields 
\$
\EE R &\lesssim \min \left\{1, \gamma^2\delta^{-1}\EE  \bigg\{\max_{j}\sum \big|X_{ij}\big|^3\bigg\}+ \gamma^2\delta^{-1}\EE  \bigg\{\max_{j}\sum \big|Y_{ij}\big|^3\bigg\}\right\}. 
\$
Using the fact that $R_i=f(T_i)-f(T_{i+1})-\Rom{1}_i-\Rom{2}_i$ and in a similar argument, we shall obtain 
\$
\EE R_i &\lesssim  \gamma^{-1}\delta^{-1}\EE\min\Big\{\gamma\delta +  \gamma\Big(\max_{1\leq j\leq d}|X_{ij}| + \max_{1\leq j\leq d}|Y_{ij}|\Big) \\
&\qquad+ \gamma^2\Big(\max_{1\leq j\leq  d}|X_{ij}|^2 + \max_{1\leq j\leq d}|Y_{ij}|^2\Big),   \gamma^3\Big(\max_{1\leq j\leq d}|X_{ij}|^3 + \max_{1\leq j\leq d}|Y_{ij}|^3\Big) \Big\}.
\$
}
We need the following lemma, which enables the relaxation of the moment conditions. 
\begin{lemma}\label{lemma:moment}
Let $a\geq 1$ and $x\geq 0$. For any $0 \leq \iota \leq  1$, we have 
\begin{gather*}
\min\big\{a+x+x^2, x^3\big\} \leq 3 a^{(1-\iota)/3} x^{2+\iota}.
\end{gather*}
\end{lemma}

\begin{proof}[Proof of Lemma \ref{lemma:moment}]
Using the fact that $a>1$ and splitting the support of $x$,  we obtain 
\$
\min\big\{a+x+x^2, x^3\big\}&\leq 3\min\big\{a\vee x\vee x^2, x^3 \big\}\\
&\leq 3 \Big(\min\big\{a, x^3\big\}1\big(x\leq 1\big)+ \min\big\{a, x^3\big\}1\big(1<x\leq a^{1/3}\big)\\
&~~~~+\min\big\{a, x^3\big\}1\big(a^{1/3}<x\leq  a^{1/2}\big)+\min\big\{x^2, x^3\big\}1\big(x>a^{1/2}\big)\Big)\\
&\leq 3   a^{(1-\iota)/3} x^{2+\iota}.
\$ 
\end{proof}
Applying Lemma \ref{lemma:moment} with { $x=\gamma\max(|X_{ij}|,|Y_{ij}|)$}, we obtain \scolor{
\$
\EE R_i\lesssim \gamma^{(4+2\iota)/3}\delta^{-(2+\iota)/3}C_i (2 + \iota),
\$
}
where $C_i(2\!+\!\iota) = \EE\big(\max_{1\leq j\leq d}|X_{ij}|^{2+\iota} + \max_{1\leq j\leq d}|Y_{ij}|^{2+\iota}\big)$. Combining two different bounds together yields  Lemma \ref{lemma:remainder}. 
\end{proof}

\end{proof}

\bibliographystyle{ims}
\bibliography{SQ2016_RGA}

%
%

\end{document}